\numberwithin{equation}{section}
\numberwithin{figure}{section}
\newtheorem{thm}{Theorem}[section]
\newtheorem{lem}[thm]{Lemma}
\newtheorem{exam}[thm]{Example}
\newtheorem{rem}[thm]{Remark}
\begin{document}
\begin{center}
	{\large \bf Planar networks and total positivity of Riordan arrays}
\end{center}

\begin{center}
Lanqi Du$^{1}$, Ethan Y.H. Li$^{2}$\\[6pt]
\end{center}

\begin{center}
School of Mathematics and Statistics,\\
Shaanxi Normal University, Xi'an, Shaanxi 710119, P. R. China \\[6pt]
Email: $^{1}${\tt dulanqi@snnu.edu.cn} $^{2}${\tt yinhao\_li@snnu.edu.cn}
\end{center}

\noindent\textbf{Abstract.}
In 2015, Chen, Liang and Wang provided several sufficient conditions for the total positivity of Riordan arrays and asked for combinatorial proofs of these results. In this paper, we present such proofs by constructing suitable planar networks with non-negative weights and applying the Lindstr\"om-Gessel-Viennot lemma. Moreover, we slightly generalize one of the results and give more totally positive Riordan arrays.
\vspace{0.5cm}

\noindent \emph{AMS Mathematics Subject Classification 2020:} 05A05, 05A20, 05C20

\noindent \emph{Keywords:} Riordan array; total positivity; planar network; production matrix; the Lindstr\"om-Gessel-Viennot lemma.

\section{Introduction}\label{sec-intro}

As elements of the Riordan group, the Riordan arrays unify a lot of themes in enumerative combinatorics \cite{SGWW91}. Let $g(x) = \sum_{n \ge 0}g_nx^n$ and $f(x) = \sum_{n\ge 0}f_nx^n$ be two formal power series with $g_0\neq 0$, $f_0 = 0$ and $f_1 \neq 0$. Then a (proper) \textit{Riordan array} $(g,f)$ is defined as the matrix $R=(r_{n,k})_{n,k \ge 0}$ such that the generating function of the $k$th column is $g(x)f^k(x)$, i.e.,
\[
r_{n,k} = [x^n] g(x)f^k(x).
\]
As an alternative description, a Riordan array can be characterized by the $A$-sequence $(a_n)_{n \ge 0}$ and $Z$-sequence $(z_n)_{n \ge 0}$ \cite{MRSV97,Sp94}, which satisfy the following recurrence relations:
\[
r_{0,0} = 1, \quad r_{n+1,0} = \sum_{i \ge 0} z_i r_{n,i}, \quad r_{n+1,k+1} = \sum_{i \ge 0} a_i r_{n,k+i},
\]
see \cite{Bar25,He15,HS09} for more information.

Among the various properties of matrices, total positivity has received significant research interests. This concept plays an important role in many fields, such as convexity, approximation theory, real analysis, combinatorics and Lie group theory, see \cite{GK50,Kar68,Pin10} and the references therein. Following Karlin \cite{Kar68}, we call a matrix \textit{totally positive of order $r$} ($\textup{TP}_r$) if all its minors of order at most $r$ are non-negative, and call it \textit{totally positive} ($\textup{TP}$) if all its minors are non-negative. In particular, the total positivity of Riordan arrays has also been extensively studied, see \cite{CLW15,CS24,CW19,HS24,HS25,MMW22,Slo20,Zhu21} for instance. Especially, Chen, Liang and Wang \cite{CLW15} transformed this issue into the study of production matrices.
\begin{thm}[{\cite[Theorem 2.1]{CLW15}}]\label{thm-prod-mat-tp}
  Let $R$ be a Riordan array whose $A$- and $Z$-sequences are $(a_n)_{n \ge 0}$ and $(z_n)_{n \ge 0}$ respectively.
\begin{itemize}
\item[(i)] If its production matrix
\[
    J(R) =
    \begin{pmatrix}
    z_0 & a_0 & \quad & \quad & \quad\\
    z_1 & a_1 & a_0 & \quad & \quad\\
    z_2 & a_2 & a_1 & a_0 & \quad\\
    \vdots & \vdots & \quad & \quad & \ddots
    \end{pmatrix}
\]
is $\textup{TP}$ (resp. $\textup{TP}_r$), then $R$ is also $\textup{TP}$ (resp. $\textup{TP}_r$).
\item[(ii)] If $J(R)$ is $\textup{TP}_2$ and all $z_n \ge 0$, then $(r_{n,0})_{n \ge 0}$ is log-convex, i.e., $r_{i,0}^2 \le r_{i-1,0}r_{i+1,0}$ for $i \ge 1$.
\end{itemize}
\end{thm}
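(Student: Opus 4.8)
The plan is to derive (i) by interpreting the entries of $R$ as weighted counts of directed paths in a planar network built from the production matrix $J:=J(R)$, and then to deduce (ii) from (i) by a short computation with the $Z$-sequence recurrence. The key preliminary observation is that iterating the $A$- and $Z$-sequence recurrences is exactly right multiplication by $J$: reading off the rows of $J$ one checks that $R_{n+1,\bullet}=R_{n,\bullet}J$, while $R_{0,\bullet}=(1,0,0,\dots)=e_0^{\mathsf T}$ because $R$ has an $A$- and $Z$-sequence and hence $r_{0,0}=1$. Consequently $r_{n,k}=\bigl(J^{\,n}\bigr)_{0,k}$. Since $J$ is lower Hessenberg ($J_{s,t}=0$ for $t>s+1$), the only index sequences $0=\ell_0,\ell_1,\dots,\ell_n=k$ contributing to $(J^{\,n})_{0,k}$ satisfy $\ell_t\le t$ for every $t$; hence in any minor $\det R[I\,|\,K]$ with $I=\{i_0<\dots<i_m\}$ and $K=\{j_0<\dots<j_m\}$ we may assume $j_m\le i_m=:N$ (otherwise the last column vanishes) and replace $J$ by its truncation $J^{(N)}:=(J_{s,t})_{0\le s,t\le N}$, which is $\textup{TP}$ (resp. $\textup{TP}_r$) as a submatrix of $J$. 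This reduces the problem to finite matrices.

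For (i), suppose first that $J$ is $\textup{TP}$. By the classical description of finite totally positive matrices as path matrices of planar networks, $J^{(N)}$ is the path matrix of an acyclic planar network $\mathcal N$ with non-negative edge weights, drawn in a rectangle with sources $p_0,\dots,p_N$ down the left side and sinks $q_0,\dots,q_N$ down the right side. I would then place $N$ copies $\mathcal N^{(1)},\dots,\mathcal N^{(N)}$ side by side, gluing the sinks of $\mathcal N^{(\ell)}$ to the sources of $\mathcal N^{(\ell+1)}$ in order; the resulting acyclic planar network $\widetilde{\mathcal N}$ has vertex columns $P^{0}_\bullet,\dots,P^{N}_\bullet$ (indexed $0,\dots,N$ from top to bottom within each column), and the weighted number of directed paths from $P^{\ell}_0$ to $P^{N}_j$ equals $\bigl((J^{(N)})^{\,N-\ell}\bigr)_{0,j}=r_{N-\ell,\,j}$. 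Attaching a weight-$1$ pendant edge to create a genuine source $\widehat s_a$ just above $P^{\,N-i_a}_0$ for $a=0,\dots,m$, and taking sinks $t_b:=P^{N}_{j_b}$, the path matrix of $\widetilde{\mathcal N}$ from $\{\widehat s_a\}$ to $\{t_b\}$ is precisely $R[I\,|\,K]$; moreover, along the outer face the $\widehat s_a$ occur (reading the top edge left to right) in the order $\widehat s_m,\dots,\widehat s_0$ while the $t_b$ occur (reading the right side top to bottom) in the order $t_0,\dots,t_m$, so the only family of pairwise vertex-disjoint paths from the sources to the sinks pairs $\widehat s_a$ with $t_a$. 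The Lindstr\"om--Gessel--Viennot lemma then exhibits $\det R[I\,|\,K]$ as a sum of products of non-negative edge weights, hence as a non-negative number; since $I$ and $K$ were arbitrary, $R$ is $\textup{TP}$. For the $\textup{TP}_r$ version one restricts throughout to $|I|=|K|\le r$; since a $\textup{TP}_r$ matrix need not be a path matrix of any planar network, I would treat that case by a direct argument instead: the matrix $\Phi_m$ obtained by stacking $I,J,J^{2},\dots,J^{m}$ vertically is the product of $\mathrm{diag}(I,\Phi_{m-1})$ and the matrix with block rows $I$ and $J$, and both factors are $\textup{TP}_r$ (by induction and a direct check), so $\Phi_m$ is $\textup{TP}_r$ by the Cauchy--Binet formula, whence $R$, a row-submatrix of $\Phi_m$ for $m$ large, is $\textup{TP}_r$.

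Finally, (ii) follows from (i): $R$ is then $\textup{TP}_2$, and using the $Z$-sequence recurrence in the forms $r_{n+1,0}=\sum_{i\ge0}z_ir_{n,i}$ and $r_{n,0}=\sum_{i\ge0}z_ir_{n-1,i}$ (finite sums, since $r_{n,i}=0$ for $i>n$) one computes
\begin{align*}
r_{n-1,0}\,r_{n+1,0}-r_{n,0}^{2}
&=\sum_{i\ge0}z_i\bigl(r_{n-1,0}\,r_{n,i}-r_{n,0}\,r_{n-1,i}\bigr)\\
&=\sum_{i\ge0}z_i\,\det R[\{n-1,n\}\,|\,\{0,i\}],
\end{align*}
where each $2\times2$ minor is $\ge0$ since $R$ is $\textup{TP}_2$, and each $z_i\ge0$ by hypothesis; hence the right-hand side is $\ge0$, i.e.\ $(r_{n,0})_{n\ge0}$ is log-convex. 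I expect the only genuinely delicate point to lie in (i): verifying that after concatenation the designated sources and sinks sit on the unbounded face in the order that forces the trivial matching, on which the applicability of the Lindstr\"om--Gessel--Viennot lemma depends, together with the routine passage from the infinite matrix $J$ to its finite truncations, which the Hessenberg remark above takes care of.
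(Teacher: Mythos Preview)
The paper does not actually prove this theorem; it is quoted from \cite{CLW15}, and the only comment on its proof is the remark following Theorem~\ref{thm-comb-abst}, which notes that the argument in \cite{CLW15} proceeds via matrix decompositions and that these can be recast in network language via Lemma~\ref{lem-pn-product} and Theorem~\ref{thm-tp-pn}. Your planar-network argument for the $\textup{TP}$ half of (i) is in exactly that spirit and is essentially sound, subject to the boundary-placement caveat you yourself flag; part (ii) is also correct.

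There is, however, a genuine gap in your treatment of the $\textup{TP}_r$ half of (i). The factorisation
\[
\Phi_m=\begin{pmatrix}I&0\\0&\Phi_{m-1}\end{pmatrix}\begin{pmatrix}I\\J\end{pmatrix}
\]
is correct, and the block-diagonal factor is indeed $\textup{TP}_r$ by induction, but the second factor $\bigl(\begin{smallmatrix}I\\J\end{smallmatrix}\bigr)$ need not be $\textup{TP}_r$ even when $J$ is $\textup{TP}$: already for $J=\bigl(\begin{smallmatrix}1&1\\1&1\end{smallmatrix}\bigr)$ the submatrix of $\bigl(\begin{smallmatrix}I\\J\end{smallmatrix}\bigr)$ on rows $1,2$ and columns $0,1$ has determinant $-1$. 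The remedy --- which is precisely the matrix decomposition alluded to in the paper's remark and used in \cite{CLW15} --- is to keep only row~$0$ of each power from the start. Writing $R^{(n)}$ for the matrix of rows $0,\dots,n$ of $R$, one has
\[
R^{(n+1)}=\begin{pmatrix}1&\mathbf 0\\ \mathbf 0&R^{(n)}\end{pmatrix}\begin{pmatrix}e_0^{\mathsf T}\\[2pt]J\end{pmatrix},
\]
and now the second factor \emph{is} $\textup{TP}_r$ whenever $J$ is: any minor touching the top row either has that row equal to zero (if column~$0$ is not selected) or, after Laplace expansion along it, reduces to a single minor of $J$. Induction on $n$ via Cauchy--Binet then gives $R^{(n)}$ $\textup{TP}_r$ for every $n$, which is the desired conclusion.
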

Using Theorem \ref{thm-prod-mat-tp}, they also proved the following criteria in the special case that $J(R)$ is a tri-diagonal matrix (also called a Jacobi matrix).
\begin{thm}[{\cite[Theorem 2.3 and Proposition 2.6]{CLW15}}]\label{thm-abst}
  Let $a,b,r,s,t$ be five non-negative numbers and $R(a,b;r,s,t)$ be the Riordan array with $A = (r,s,t,0,0,\ldots)$ and $Z = (a,b,0,0,\ldots)$.
  \begin{itemize}
    \item [(i)] If $as \ge br$ and $s^2 \ge rt$, then $R(a,b;r,s,t)$ is $\textup{TP}_2$ and $(r_{n,0})_{n \ge 0}$ is log-convex.
    \item [(ii)] If $s^2 \ge 4rt$ and $a(s+\sqrt{s^2-4rt})/2 \ge br$, then $R(a,b;r,s,t)$ is totally positive.
  \end{itemize}
\end{thm}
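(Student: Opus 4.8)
The plan is to route both parts through the production matrix $J(R)$ via Theorem~\ref{thm-prod-mat-tp}, and then to treat that tridiagonal matrix by an $LU$-type factorization that can be read off a planar network. With $A=(r,s,t,0,\dots)$ and $Z=(a,b,0,\dots)$ the production matrix is the Jacobi matrix
\[
J(R)=\begin{pmatrix}
a & r & & & \\
b & s & r & & \\
& t & s & r & \\
& & t & s & \ddots \\
& & & \ddots & \ddots
\end{pmatrix},
\]
with diagonal $(a,s,s,\dots)$, superdiagonal $(r,r,\dots)$ and subdiagonal $(b,t,t,\dots)$.

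For part~(i) I would first record the elementary observation that in a tridiagonal matrix every $2\times2$ minor that is \emph{not} supported on two consecutive rows $\{i,i+1\}$ together with the two columns $\{i,i+1\}$ is either zero or equals a product of two entries of the matrix, hence is automatically non-negative once the entries are. Thus $J(R)$ is $\textup{TP}_2$ if and only if the minors on rows and columns $\{i,i+1\}$ are non-negative, that is, if and only if $as-br\ge0$ (the case $i=0$) and $s^2-rt\ge0$ (the case $i\ge1$) --- exactly the hypotheses of (i). Theorem~\ref{thm-prod-mat-tp}(i) then gives that $R(a,b;r,s,t)$ is $\textup{TP}_2$, and since the $Z$-sequence $a,b,0,0,\dots$ is non-negative, Theorem~\ref{thm-prod-mat-tp}(ii) upgrades this to the log-convexity of $(r_{n,0})_{n\ge0}$.

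For part~(ii), Theorem~\ref{thm-prod-mat-tp}(i) reduces the task to exhibiting a planar network with non-negative weights whose path matrix is $J(R)$, and I would build one from a factorization $J(R)=L\,U$ with $L$ unit lower bidiagonal and $U$ upper bidiagonal. Comparing entries (assume first $a>0$) forces $U_{i,i+1}=r$ for all $i$, $\nu_0:=U_{00}=a$, $L_{10}=b/a$, and then, writing $\nu_i:=U_{ii}$ and $L_{i+1,i}=t/\nu_i$,
\[
\nu_1=s-\frac{br}{a},\qquad \nu_i=s-\frac{rt}{\nu_{i-1}}\quad(i\ge2).
\]
Since a bidiagonal matrix with non-negative entries is the path matrix of an obvious planar network, concatenating the networks of $L$ and $U$ yields a planar network for $J(R)=L\,U$, and the Lindstr\"om--Gessel--Viennot lemma then makes $J(R)$, and hence $R(a,b;r,s,t)$, totally positive --- provided every weight is non-negative, i.e.\ provided $\nu_i\ge0$ for all $i$.

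Establishing $\nu_i\ge0$ is the heart of the matter, and it is exactly here that both hypotheses of (ii) enter. Let $\mu=(s+\sqrt{s^2-4rt})/2$ and $\mu'=(s-\sqrt{s^2-4rt})/2$ be the two roots of $x^2-sx+rt$; the condition $s^2\ge4rt$ makes them real with $0\le\mu'\le\mu$, $\mu+\mu'=s$ and $\mu\mu'=rt$. The condition $a\mu\ge br$ gives $\nu_1=s-br/a\ge s-\mu=\mu'\ge0$, and if $\nu_{i-1}\ge\mu'$ then $rt/\nu_{i-1}\le rt/\mu'=\mu$, so $\nu_i=s-rt/\nu_{i-1}\ge s-\mu=\mu'$; by induction $\nu_i\ge\mu'\ge0$ for all $i$, whence $t/\nu_i$ is well defined and non-negative. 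The degenerate cases $a=0$ or $rt=0$, in which $\mu'=0$ or the recursion terminates, collapse $J(R)$ to a bidiagonal matrix or a product of two such and are immediate. I expect this monotonicity step --- recognising $\nu_i=s-rt/\nu_{i-1}$ as a continued-fraction (equivalently $LU$) iteration and showing it never drops below the smaller root $\mu'$ --- to be the main obstacle, together with the bookkeeping of presenting $L$ and $U$ as an explicit non-negatively weighted planar network and identifying the non-intersecting path families that the Lindstr\"om--Gessel--Viennot lemma counts.
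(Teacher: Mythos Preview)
Your proposal is correct and matches the paper's argument in substance. For part~(ii) your $LU$ factorization $J(R)=LU$ with $\nu_i=s-rt/\nu_{i-1}$ is exactly the planar network the paper builds (their intermediate vertices $W_i$ encode the factorization, with $k_i=\nu_i$ and $l_i=t/\nu_i$), and your induction bounding $\nu_i\ge\mu'$ via the fixed points of $x\mapsto s-rt/x$ is the same monotonicity step, organised slightly more cleanly through the quadratic $x^2-sx+rt$; the paper separates out the degenerate cases $r=0$, $s=0$, $t=0$, $a=0$ more explicitly than you do, but nothing is missing. For part~(i) you argue by direct inspection of tridiagonal $2\times2$ minors, whereas the paper dresses the same computation in a (non-planar) network and invokes the signed Lindstr\"om--Gessel--Viennot identity; the two are equivalent, your version being the more elementary and the paper's being closer to the combinatorial theme it advertises.
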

Moreover, Chen, Liang and Wang \cite{CLW15} also considered consistent and quasi-consistent Riordan arrays. Precisely, a Riordan array $R$ with $A$-sequence $(a_n)_{n\ge 0}$ is called \textit{consistent} (resp. \textit{quasi-consistent}) if its $Z$-sequence is $(a_n)_{n\ge 0}$ (resp. $Z = (a_n)_{n \ge 1}$). In both cases the total positivity of the corresponding production matrices relies only on the property of the sequence $A=(a_n)_{n\ge0}$. Specifically, if the Toeplitz matrix of $A$
\[
    (a_{i-j})_{i,j \ge 0} =
    \begin{pmatrix}
    a_0 & \quad & \quad & \quad\\
    a_1 & a_0 & \quad & \quad\\
    a_2 & a_1 & a_0 & \quad\\
    \vdots & \quad & \quad & \ddots
    \end{pmatrix}
\]
is $\textup{TP}_r$, then $A$ is said to be a \textit{P\'olya frequency sequence of order $r$} ($\textup{PF}_r$). Similarly, if $(a_{i-j})_{i,j \ge 0}$ is $\textup{TP}$, then $A$ is said to be a \textit{P\'olya frequency sequence} ($\textup{PF}$). These concepts are intensively studied by Schoenberg, Edrei and their collaborators, see \cite{ASW52,Bre89,Kar68,WY05} for surveys and more results. In particular, if $(a_{i-j})_{i,j \ge 0}$ is $\textup{TP}_2$ then $A$ is a log-concave sequence, which is an important concept in combinatorics and closely related to unimodality and real-rootedness, see \cite{Bre89,Sta89}.

\begin{thm}[{\cite[Theorem 2.7]{CLW15}}]\label{thm-PF}
Let $R$ be a consistent or quasi-consistent Riordan array with two sequences $A$ and $Z$. If $A$ is $\textup{PF}_r$ (resp. $\textup{PF}$),  then $R$ is $\textup{TP}_r$ (resp. $\textup{TP}$). In particular, if the $A$ is log-concave, then $(r_{n,0})_{n \ge 0}$ is log-convex, and $(r_{n,k})_{0 \le k \le n}$ is log-concave for each $n$.
\end{thm}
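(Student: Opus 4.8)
The plan is to derive the theorem from the production-matrix criterion of Theorem~\ref{thm-prod-mat-tp}, together with two elementary structural observations about consistent and quasi-consistent arrays and one application of Lagrange inversion. First I would write down $J(R)$ explicitly. For a quasi-consistent array $J(R)_{n,k}=a_{n+1-k}$ (with $a_m=0$ for $m<0$), so $J(R)$ is the Toeplitz matrix $(a_{i-j})_{i,j\ge0}$ with its top row deleted; for a consistent array, columns $0$ and $1$ of $J(R)$ coincide, both equal to $(a_n)_{n\ge0}$, while columns $1,2,3,\dots$ form $(a_{i-j})_{i,j\ge0}$. In either case every minor of $J(R)$ is either $0$ (two equal columns) or a minor of the Toeplitz matrix of $A$; hence if $A$ is $\textup{PF}_r$ (resp. $\textup{PF}$) then $J(R)$ is $\textup{TP}_r$ (resp. $\textup{TP}$), and Theorem~\ref{thm-prod-mat-tp}(i) yields that $R$ is $\textup{TP}_r$ (resp. $\textup{TP}$). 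In keeping with the theme of the paper, this last implication is the planar-network argument already established for Theorem~\ref{thm-prod-mat-tp}: realize $(a_{i-j})_{i,j\ge0}$ by a planar network, modify it to realize $J(R)$, stack the copies prescribed by the production matrix, and apply the Lindstr\"om-Gessel-Viennot lemma.

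For the second part, assume $A$ is log-concave. As $A$ is non-negative with $a_0=f_1\ne0$, its support is an interval $\{0,1,\dots,d\}$, so $A$ is $\textup{PF}_2$; thus $J(R)$ is $\textup{TP}_2$, every $z_n\ge0$, and Theorem~\ref{thm-prod-mat-tp}(ii) immediately gives that $(r_{n,0})_{n\ge0}$ is log-convex.

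The row log-concavity is the substantive point. I would first reduce both cases to the array $(1,f)$, where $f=xA(f)$; write $c_{n,k}:=[x^n]f^k$. From $g=g_0/(1-xZ(f))$ and $A(f)=f/x$ one gets $g=f/(xa_0)$ in the quasi-consistent case, so $r_{n,k}=c_{n+1,k+1}/a_0$ and each row of $R$ is a row of $(1,f)$ up to a shift and a positive scalar; in the consistent case $g=1/(1-f)$, so $r_{n,k}=\sum_{j\ge k}c_{n,j}$ and each row of $R$ is a reverse partial sum of a row of $(1,f)$ (with $r_{n,0}=r_{n,1}$ and $r_{n,0}-r_{n,2}=c_{n,1}\ge0$ dealing with the left boundary). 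Partial summation multiplies the Toeplitz matrix of a sequence by the totally positive Toeplitz matrix of $(1,1,1,\dots)$, and products of $\textup{TP}_2$ matrices are $\textup{TP}_2$, so partial summation---and hence, after conjugating by the order-reversing involution, reverse partial summation---preserves log-concavity. Therefore it suffices to show that every row of $(1,f)$ is log-concave.

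For this I would invoke Lagrange inversion for $f=xA(f)$: for $n\ge1$,
\[
c_{n,k}=[x^n]f^k=\frac{k}{n}\,[t^{n-k}]A(t)^n=\frac{k}{n}\,a^{(n)}_{n-k},\qquad A(t)^n=\sum_{m\ge0}a^{(n)}_m t^m.
\]
Since $A$ is log-concave with interval support, $(a_{i-j})_{i,j\ge0}$ is $\textup{TP}_2$, hence so is its $n$-th power, which is the Toeplitz matrix of the $n$-fold convolution $(a^{(n)}_m)_m$; therefore $(a^{(n)}_m)_m$ is log-concave. The inequality $c_{n,k}^2\ge c_{n,k-1}c_{n,k+1}$ then becomes, after multiplying through by $n^2$,
\[
k^2\bigl(a^{(n)}_{n-k}\bigr)^2\ \ge\ (k^2-1)\,a^{(n)}_{n-k+1}a^{(n)}_{n-k-1},
\]
which follows from the log-concavity of $(a^{(n)}_m)_m$ together with $k^2\ge k^2-1\ge0$, the extreme values of $k$ being trivial. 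The one genuine obstacle I foresee is precisely this passage from log-concavity of the rows of $(1,f)$ to a manifestly non-negative statement: the Lagrange-inversion identity makes it painless, but a fully combinatorial argument would instead realize $c_{n,k}$ as a weighted count of constrained lattice paths and construct an explicit weight-preserving injection from path-pairs enumerated by $c_{n,k-1}c_{n,k+1}$ into path-pairs enumerated by $c_{n,k}^2$, the delicate point being to respect the path constraint when the tails of two crossing paths are switched.
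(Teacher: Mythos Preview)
Your argument is correct, but it diverges from the paper's in two notable ways.

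For the first part, you pass directly from $\textup{PF}_r$ of $A$ to $\textup{TP}_r$ of $J(R)$ by observing that every minor of $J(R)$ is either zero (two equal columns, in the consistent case) or literally a minor of the Toeplitz matrix $(a_{i-j})_{i,j\ge0}$ (after shifting rows or columns). The paper instead writes $J(R)$ explicitly as a \emph{product} of the Toeplitz matrix of $A$ with the bi-diagonal matrix
\[
\begin{pmatrix}
t & 1 & 0 & 0 & \\
0 & 0 & 1 & 0 & \\
0 & 0 & 0 & 1 & \\
\vdots & & & & \ddots
\end{pmatrix}
\]
(on the right for $Z=tA$, on the left for $Z=(ta_0+a_1,a_2,a_3,\dots)$), invokes Example~\ref{ex-bidiag} and Theorem~\ref{thm-tp-pn} to get planar networks for the two factors, and glues them via Lemma~\ref{lem-pn-product}. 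Your minor-by-minor observation is shorter and handles the $\textup{PF}_r$ case without further comment; the paper's factorization is more in keeping with the planar-network theme and, more importantly, it generalizes at no cost to the one-parameter families $Z=(ta_0,ta_1,\dots)$ and $Z=(ta_0+a_1,a_2,\dots)$ for any $t\ge0$, which is exactly the content of Theorem~\ref{thm-comb-PF}. Your ``modify it to realize $J(R)$'' is the step the paper makes precise through this factorization.

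For the ``in particular'' clause, the paper does not give a new proof at all: it only addresses the total-positivity half (see the sentence introducing Theorem~\ref{thm-comb-PF}) and simply quotes the log-convexity and row log-concavity from \cite{CLW15}. Your treatment---log-convexity of $(r_{n,0})$ via Theorem~\ref{thm-prod-mat-tp}(ii), and row log-concavity via the reductions $r_{n,k}=c_{n+1,k+1}/a_0$ or $r_{n,k}=\sum_{j\ge k}c_{n,j}$ together with the Lagrange-inversion identity $c_{n,k}=\frac{k}{n}a^{(n)}_{n-k}$ and log-concavity of the convolution powers $(a^{(n)}_m)_m$---is correct and genuinely supplies content the paper omits. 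The only caveat is that this Lagrange-inversion step is algebraic rather than bijective, so it is a proof of the statement but not yet the fully combinatorial injection you sketch in your final sentence.
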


Chen, Liang and Wang proved Theorem \ref{thm-abst} and \ref{thm-PF} by using recurrence relations and calculating certain minors, after which they asked for combinatorial proofs of these results. In this paper we solve their problem by constructing planar networks with non-negative weights and using the famous Lindstr\"om-Gessel-Viennot lemma, which has already been a useful tool for proving total positivity of matrices. Further, we slightly generalize the first part of Theorem \ref{thm-PF}  , and apply it to derive more totally positive Riordan arrays.

This paper is organized as follows. In Section \ref{sec-pn}, we give a brief introduction to planar networks of matrices, the Lindstr\"om-Gessel-Viennot lemma and several well-known results on planar networks, which may be omitted for readers familiar with this field. In Section \ref{sec-main} we give the combinatorial proofs of Theorem \ref{thm-abst} and \ref{thm-PF} by constructing suitable  networks, and proceed to present generalizations and applications.

\section{Networks and the Lindstr\"om-Gessel-Viennot lemma}\label{sec-pn}

In this section we shall introduce planar networks and the Lindstr\"om-Gessel-Viennot lemma, which serves as an important tool for proving non-negativity of determinants.

Let $D$ be a directed graph (digraph) with vertex set $V(D)$ and arc set $A(D)$. Recall that $D$ is said to be \textit{acyclic} if it contains no directed cycles, \textit{locally finite} if the number of paths from $u$ to $v$ is finite for any $u,v \in V(D)$ and \textit{planar} if it can be embedded into the plane such that any two intersecting arcs intersect only at vertices. Then we define a \textit{network} to be a pair $(D,\mathrm{wt})$ such that $D$ is an acyclic locally finite planar digraph, and $\mathrm{wt}=\mathrm{wt}_D$ (called the \textit{weight function}) is a map from $A(D)$ to a commutative ring with identity. Further, $(D,\mathrm{wt})$ is called a \textit{planar network} if $D$ is planar. 

Let $\mathbf{U}=(U_1,\ldots,U_n)$ and $V=(V_1,\ldots,V_n)$ be two sequences of \textit{sources} and \textit{sinks} of $D$ respectively, that is, there is no arcs pointing to $U_i$ and no arcs pointing out of $V_i$ for each $1 \le i \le n$. 
Let $P(U_i,V_j)$ be the set of directed paths from $U_i$ to $V_j$. For any directed path we define its weight as the product of the weights of all arcs in it, and we define $p(U_i,V_j)$ to be the sum of weights of all paths in $P(U_i,V_j)$. Then $(D,\mathrm{wt})$ is said to be a network of an $n \times n$ matrix $M=(m_{i,j})$ if $m_{i,j} = p(U_i,V_j)$ for $1\le i,j \le n$. 

Moreover, a tuple $(P_1,\ldots,P_n)$ of directed paths is called a \textit{non-intersecting family} if $P_i$ and $P_j$ do not intersect for any $i \neq j$, and we define $\mathcal{N}(\mathbf{U},\mathbf{V})$ to be the set of non-intersecting families $(P_1,\ldots,P_n)$ such that $P_i$ is a directed path from $U_i$ to $V_i$ for any $1 \le i \le n$. The two sequences $\mathbf{U}$ and $\mathbf{V}$ are said to be \textit{compatible} if $\mathcal{N}(\mathbf{U},\sigma(\mathbf{V})) = \mathcal{N}((U_1,\ldots,U_n),(V_{\sigma(1)},\ldots,V_{\sigma(n)})$ is empty for any non-identity permutation $\sigma$ of $[n]=\{1,\ldots,n\}$. As a stronger property, $\mathbf{U}$ and $\mathbf{V}$ are called \textit{fully compatible} if for any $I=\{i_1,\ldots,i_k\}_< \subseteq [n]$ and $J=\{j_1,\ldots,j_k\}_< \subseteq [n]$, the two sequences $U_I=(U_{i_1},\ldots,U_{i_k})$ and $V_J=(V_{j_1},\ldots,V_{j_k})$ are compatible \cite{Bre95}. Actually, this concept is equivalent to the case that $(U_{i_1},U_{i_2})$ and $(V_{j_1},V_{j_2})$ are compatible for any $i_1<i_2$ and $j_1<j_2$, which is also named $D$-compatible \cite{Ste90}. Then the Lindstr\"om-Gessel-Viennot lemma is stated as follows.
\begin{lem}[the Lindstr\"om-Gessel-Viennot lemma]\label{lem-lgv}
Let $(D,\mathrm{wt})$ be a network for an $n \times n$ matrix $M$ with a sequence of sources $\mathbf{U} = (U_1,\ldots,U_n)$ and a sequence of sinks $\mathbf{V} = (V_1,\ldots,V_n)$. 
\begin{itemize}
\item[(i)] For any $I,J \subseteq [n]$ with $|I|=|J|=k$ ($1 \le k \le n$), we have
\[
\det M_{I,J} = \sum_{\sigma \in \mathfrak{S}_k} \mathrm{sign}(\sigma) \sum_{(P_1,\ldots,P_k) \in \mathcal{N}(\mathbf{U}_I,\sigma(\mathbf{V}_J))} \prod_{i=1}^k\mathrm{wt}(P_i).
\]
\item[(ii)] Further, if $D$ is planar and $\mathbf{U},\mathbf{V}$ are fully compatible, then 
\[
\det M_{I,J} = \sum_{(P_1,\ldots,P_k) \in \mathcal{N}(\mathbf{U}_I,\mathbf{V}_J)} \prod_{i=1}^k\mathrm{wt}(P_i)
\]
for any $I,J \subseteq [n]$ with $|I| = |J| = k$. In particular, if the weights of all arcs are non-negative, then $M$ is totally positive.
\end{itemize}
\end{lem}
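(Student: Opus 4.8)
The plan is to derive part (i) from the Leibniz expansion of the determinant together with a sign-reversing involution in the spirit of Lindstr\"om, and then to read off part (ii) from (i) using the compatibility hypothesis.

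For part (i), write $I=\{i_1<\cdots<i_k\}$ and $J=\{j_1<\cdots<j_k\}$ and start from $\det M_{I,J}=\sum_{\sigma\in\mathfrak{S}_k}\mathrm{sign}(\sigma)\prod_{a=1}^{k}m_{i_a,j_{\sigma(a)}}$. Since each entry $m_{i,j}=p(U_i,V_j)$ is the finite sum $\sum_{P\in P(U_i,V_j)}\mathrm{wt}(P)$ (finiteness by local finiteness of $D$), expanding the product over $a$ rewrites $\prod_a m_{i_a,j_{\sigma(a)}}$ as $\sum\prod_{a=1}^{k}\mathrm{wt}(P_a)$, the sum running over all tuples $(P_1,\ldots,P_k)$ with $P_a$ a directed path from $U_{i_a}$ to $V_{j_{\sigma(a)}}$ and no disjointness imposed. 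Thus $\det M_{I,J}$ is the signed sum of $\prod_a\mathrm{wt}(P_a)$ over all such tuples and all $\sigma$, and it remains to cancel the contribution of every tuple in which two paths share a vertex. I would do this by an involution on these ``bad'' tuples: given a bad tuple with permutation $\sigma$, let $a$ be the least index for which $P_a$ meets another path, let $x$ be the first vertex of $P_a$ (read from its source) lying on a second path, let $b$ be the largest index $b\neq a$ with $x\in P_b$, and interchange the portions of $P_a$ and $P_b$ after $x$. This reuses exactly the same arcs, so it preserves $\prod_a\mathrm{wt}(P_a)$; it replaces $\sigma$ by $\sigma\,(a\,b)$, so it reverses $\mathrm{sign}(\sigma)$; and, using acyclicity of $D$, the triple $(a,x,b)$ extracted from the new tuple is the old one, so the map is a fixed-point-free involution on bad tuples. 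The bad tuples cancel in pairs, leaving only the families in $\mathcal N(\mathbf U_I,\sigma(\mathbf V_J))$, which is the asserted formula.

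For part (ii), apply (i) and invoke full compatibility of $\mathbf U,\mathbf V$: by definition $\mathbf U_I$ and $\mathbf V_J$ are compatible for all equal-sized $I,J$, so $\mathcal N(\mathbf U_I,\sigma(\mathbf V_J))=\varnothing$ for every non-identity $\sigma$, and the formula from (i) collapses to $\det M_{I,J}=\sum_{(P_1,\ldots,P_k)\in\mathcal N(\mathbf U_I,\mathbf V_J)}\prod_{i=1}^{k}\mathrm{wt}(P_i)$, with planarity of $D$ being the concrete geometric setting in which such compatibility holds. If moreover every arc has non-negative weight, each path weight and hence each term of this sum is non-negative, so $\det M_{I,J}\ge 0$ for all $I,J$; thus every minor of $M$ is non-negative and $M$ is totally positive.

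The genuinely delicate step is checking that the tail-swapping map is an involution, i.e.\ that re-applying the selection rule to the swapped tuple returns the same $(a,x,b)$. This rests on the facts that the swap changes neither the initial segment of $P_a$ up to $x$, nor the set of indices of paths passing through $x$, nor which of $P_1,\ldots,P_{a-1}$ meet another path, and acyclicity is exactly what forbids a path from revisiting a vertex and so breaking these facts. The remaining ingredients --- the Leibniz expansion, distributing the sums, the sign and weight bookkeeping, and the collapse in part (ii) --- are routine.
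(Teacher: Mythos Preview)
Your argument is the standard Lindstr\"om--Gessel--Viennot proof and is correct: the Leibniz expansion followed by the tail-swap involution is exactly the classical route, and your verification that the selection rule $(a,x,b)$ is stable under the swap is sound (the key observations being that the multiset of vertices covered by the tuple is unchanged, that $P_1,\ldots,P_{a-1}$ are unaffected, and that the initial segment of $P_a$ up to $x$ is unchanged).

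However, there is nothing to compare here: the paper does \emph{not} prove Lemma~\ref{lem-lgv}. It merely states the lemma and attributes it to the literature, citing Lindstr\"om \cite{Lin73} for part~(i) and Gessel--Viennot \cite{GV85,GV89}, Brenti \cite{Bre95}, and Stembridge \cite{Ste90} for part~(ii) and its applications. So your write-up supplies a proof where the paper gives none; what you have is precisely the argument one finds in those original sources.
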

Originally Lindstr\"om \cite{Lin73} proposed Lemma \ref{lem-lgv} (i) to study the representability of matroids. Later this result was applied and developed to a great extent by Gessel and Viennot \cite{GV85,GV89}, where Lemma \ref{lem-lgv} (ii) (and in particular the special case $I=J=[n]$) was frequently used to show non-negativity of determinants,  see also Brenti \cite{Bre95} and Stembridge \cite{Ste90}. 

\begin{exam}\label{ex-bidiag}
As a simple application, we present a planar network for any bi-diagonal matrix
\[
    B_1 = \begin{pmatrix}
    a_{0,0} & a_{0,1} & 0  & 0  & \quad\\
    0 & a_{1,1} & a_{1,2} & 0  & \quad\\
    0 & 0 & a_{2,2} & a_{2,3} & \quad \\
    \vdots  & \vdots  & \quad & \quad & \ddots \\
    \end{pmatrix}
    \quad \textup{or} \quad
    B_2 = \begin{pmatrix}
    a_{0,0} & 0 & 0  & 0 &\quad\\
    a_{1,0} & a_{1,1} & 0 & 0  & \quad\\
    0 & a_{2,1} & a_{2,2} & 0 & \quad \\
    0 & 0 & a_{3,2} & a_{3,3} & \quad\\
    \vdots  & \vdots  & \quad & \quad & \ddots \\
    \end{pmatrix}.
\]
We only provide a planar network for $B_1$, and the other one is completely analogous. For $i \ge 1$, let $D$ be the (infinite) digraph with
\[
    V(D) = \{U_i \mid i \ge 0\} \cup \{V_i \mid i \ge 0\}
\]
and
\begin{align*}
    A(D) = \{U_i \to V_i \mid i \ge 0\} \cup \{U_{i} \to V_{i+1} \mid i \ge 0\},
\end{align*}
where the coordinates are given by $U_i = (0,-i)$ and $V_i = (1,-i)$. The weight function is defined as
\[
\mathrm{wt}(U_i \to V_i) = a_{i,i}, \quad \mathrm{wt}(U_i \to V_{i+1}) = a_{i,i+1},
\]
see Figure \ref{fig-bidiag}. From this construction one can easily deduce that a bi-diagonal matrix is TP if and only if all its entries are non-negative.
\begin{figure}[htbp]
  \centering
\begin{tikzpicture}[scale=1.5]
\fill (-4.5,1) circle (0.2ex);
\fill (-3.5,1) circle (0.2ex);
\fill (-4.5,0) circle (0.2ex);
\fill (-3.5,0) circle (0.2ex);
\fill (-4.5,-1) circle (0.2ex);
\fill (-3.5,-1) circle (0.2ex);
\draw [thick] [->] (-4.5,1) -- (-4,1);
\draw [thick](-4,1) -- (-3.5,1);
\draw [thick][->] (-4.5,0) -- (-4,0);
\draw [thick](-4,0) -- (-3.5,0);
\draw [thick][->] (-4.5,-1) -- (-4,-1);
\draw [thick](-4,-1) -- (-3.5,-1);
\draw [thick][->](-4.5,1) -- (-4,0.5);
\draw [thick](-4,0.5) -- (-3.5,0);
\draw [thick][->](-4.5,0) -- (-4,-0.5);
\draw [thick](-4,-0.5) -- (-3.5,-1);
\node[above] at (-4,1) { $a_{0,0}$};
\node[right] at (-4,0.5) { $a_{0,1}$};
\node[above] at (-4,0) { $a_{1,1}$};
\node[right] at (-4,-0.5) { $a_{1,2}$};
\node[above] at (-4,-1) { $a_{2,2}$};
\node at (-4.5,-1.5) {$\vdots$};
\node at (-3.5,-1.5) {$\vdots$};
\node[left] at (-4.5,1) {$U_0$};
\node[right] at (-3.5,1) {$V_0$};
\node[left] at (-4.5,0) {$U_1$};
\node[right] at (-3.5,0) {$V_1$};
\node[left] at (-4.5,-1) {$U_2$};
\node[right] at (-3.5,-1) {$V_2$};
\end{tikzpicture}
  \caption{A planar network for $B_1$}\label{fig-bidiag}
\end{figure}
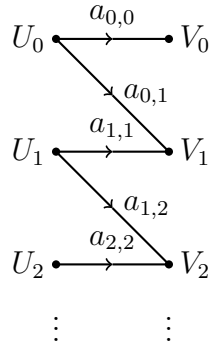
\end{exam}

Further, we have a useful lemma for planar networks of product of two matrices.
\begin{lem}\label{lem-pn-product}
  Let $(D_1,\mathrm{wt}_1)$ (resp. $(D_2,\mathrm{wt}_2)$) be a planar network for an $n \times n$ matrix $M_1=(m^{(1)}_{i,j})$ (resp. $M_2=(m^{(2)}_{i,j})$) with fully compatible source and sink sequences $\mathbf{U}=(U_1,\ldots,U_n)$ and $\mathbf{W}=(W_1,\ldots,W_n)$ (resp. $\mathbf{W'}=(W'_1,\ldots,W'_n)$ and $\mathbf{V}=(V_1,\ldots,V_n)$). Let $D$ be the digraph obtained by placing $D_1,D_2$ together and identifying $W_i$ with $W'_i$ for each $1 \le i \le n$, and let $\mathrm{wt}$ be the (unique) weight function on $A(D)$ whose restriction to $A(D_1)$ (resp. $A(D_2)$) is $\mathrm{wt}_1$ (resp. $\mathrm{wt}_2$). Then $(D,\mathrm{wt},\mathbf{U},\mathbf{V})$ is a planar network for $M_1M_2$. In particular, if the weights of all arcs are non-negative, then $M_1,M_2$ are $\textup{TP}$, and so is $M_1M_2$.
\end{lem}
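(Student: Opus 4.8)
The core of the proof is a path-factorisation argument in the glued digraph $D$, made possible by the special role of the identified vertices $W_1,\ldots,W_n$. The key observation is that in $D$ the vertices $W_k$ are the only ones shared by $D_1$ and $D_2$, and each $W_k$ is a sink of $D_1$ (no out-arc in $D_1$) and a source of $D_2$ (no in-arc in $D_2$); consequently, once a directed walk of $D$ enters $V(D_2)$ it never leaves it. From this I would first record the routine structural facts: each $U_i$ is a source of $D$ and each $V_j$ is a sink of $D$ (since $U$'s and $V$'s are not identified with anything); every directed cycle of $D$ lies entirely in $D_1$ or entirely in $D_2$, so $D$ is acyclic; and the number of $u$-$v$ walks in $D$ is finite for all $u,v$ (a short case analysis reduces it to finite sums of products of such counts in $D_1$ and $D_2$), so $D$ is locally finite. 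Hence $(D,\mathrm{wt})$ is a planar network once planarity is checked.

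Next I would prove the factorisation. A directed path $P$ in $D$ from $U_i$ to $V_j$ starts in $V(D_1)$ and ends outside it, so it uses some arc leaving $V(D_1)$; as the only such arcs issue from the $W_k$'s, $P$ passes through some $W_k$, and since it cannot return to $V(D_1)$ and no $W_\ell$ has an in-arc in $D_2$, it passes through exactly one vertex among $W_1,\ldots,W_n$. Therefore $P$ factors uniquely as $P=P'P''$ with $P'$ a path of $D_1$ from $U_i$ to $W_k$ and $P''$ a path of $D_2$ from $W'_k$ to $V_j$, and $\mathrm{wt}(P)=\mathrm{wt}_1(P')\mathrm{wt}_2(P'')$. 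Summing over all paths gives
\[
p_D(U_i,V_j)=\sum_{k=1}^{n}p_{D_1}(U_i,W_k)\,p_{D_2}(W'_k,V_j)=\sum_{k=1}^{n}m^{(1)}_{i,k}m^{(2)}_{k,j}=(M_1M_2)_{i,j},
\]
so $(D,\mathrm{wt},\mathbf{U},\mathbf{V})$ is a network for $M_1M_2$. For planarity I would superpose the given planar embeddings: draw $D_1$ in the strip $0\le x\le 1$ with $U_i$ on the line $x=0$ and $W_i$ on the line $x=1$ at height $-i$, draw $D_2$ in the strip $1\le x\le 2$ with $W'_i$ on $x=1$ and $V_i$ on $x=2$ at height $-i$ (this is the form in which the concrete networks of this paper, such as the one in Example \ref{ex-bidiag}, are presented), and identify $W_i$ with $W'_i$; two crossing arcs of $D$ then lie in a common strip and so meet only at a vertex.

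It remains to treat total positivity. Applying Lemma \ref{lem-lgv}(ii) to $(D_1,\mathrm{wt}_1)$ and to $(D_2,\mathrm{wt}_2)$ shows that $M_1$ and $M_2$ are $\textup{TP}$ when all arc weights are non-negative, and the Cauchy--Binet identity then yields that $M_1M_2$ is $\textup{TP}$. Alternatively, and more in the spirit of the paper, one can show directly that $\mathbf{U}$ and $\mathbf{V}$ are fully compatible in $D$: a non-intersecting family in $D$ restricts, via the factorisation above, to non-intersecting families in $D_1$ and in $D_2$, and full compatibility of $(\mathbf{U},\mathbf{W})$ forces the intermediate $W$-indices to appear in increasing order, after which full compatibility of $(\mathbf{W'},\mathbf{V})$ forces the defining permutation to be the identity; then Lemma \ref{lem-lgv}(ii) applies to $D$ itself. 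I expect the main obstacle to be this inheritance of full compatibility — the combinatorial step with actual content — together with the bookkeeping that promotes the two strip-embeddings to one planar embedding; by contrast, the path-factorisation identity is essentially immediate once the ``no return to $V(D_1)$'' observation is in hand.
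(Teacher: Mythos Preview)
Your proposal is correct and follows essentially the same approach as the paper: the path-factorisation through a unique $W_k$ yields the matrix-product identity, and total positivity follows from full compatibility of $\mathbf{U}$ and $\mathbf{V}$ in $D$ together with Lemma~\ref{lem-lgv}(ii). The paper's proof is a compressed version of yours---it simply asserts that each $U_i$--$V_j$ path passes through exactly one $W_k$ and that $\mathbf{U},\mathbf{V}$ are fully compatible ``by the construction of $D$'', without spelling out the acyclicity, local finiteness, planarity, or compatibility-inheritance details you supply; your Cauchy--Binet alternative is not mentioned.
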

\begin{proof}
  By the assumption, each directed path from $U_i$ to $V_j$ must pass through exactly one vertex $W_k=W'_k$ for some $1 \le k \le n$. Hence for any $1 \le i,j \le n$ we have
  \[
  p(U_i,V_j) = \sum_{k=1}^n p(U_i,W_k)p(W_k,V_j) = \sum_{k=1}^n m^{(1)}_{i,k}m^{(2)}_{k,j},
  \]
  which corresponds to the matrix multiplication. The last assertion follows from Lemma \ref{lem-lgv} and the fact that $\mathbf{U}$ and $\mathbf{V}$ are also fully compatible by the construction of $D$.
\end{proof}  
At the end of this section, we introduce the following result of Brenti, which may be considered as the converse of the last statement of Lemma \ref{lem-lgv}.
\begin{thm}[\cite{Bre95}]\label{thm-tp-pn}
  Let $M=(m_{i,j})$ be an $n \times n$ totally positive matrix. Then there exists a planar network $(D,\mathrm{wt})$ with fully compatible source and sink sequences $\mathbf{U}=(U_1,\ldots,U_n)$ and $\mathbf{V}=(V_1,\ldots,V_n)$ such that
  \[
  M= (p(U_i,V_j))_{1 \le i,j \le n}
  \]
  and the arcs all have non-negative weights.
\end{thm}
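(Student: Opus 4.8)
The plan is to prove the theorem by reducing $M$ to a product of elementary building blocks, each of which carries an obvious planar network, and then gluing these networks together with Lemma~\ref{lem-pn-product}. Concretely, I would first show that every $n\times n$ totally positive matrix $M$ admits a factorization
\[
M = L^{(1)}L^{(2)}\cdots L^{(p)}\cdot \Delta\cdot U^{(q)}\cdots U^{(2)}U^{(1)},
\]
in which $\Delta$ is diagonal with non-negative entries and each $L^{(i)}$ (resp.\ $U^{(j)}$) is an \emph{elementary bidiagonal} matrix, differing from the identity in a single non-negative entry just below (resp.\ above) the diagonal. Each such factor is itself totally positive and carries a planar network of the type in Example~\ref{ex-bidiag}, with sources on a left vertical line and sinks on a right vertical line in matching top-to-bottom order, and with all arc weights, equal to the matrix entries, non-negative. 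Concatenating these networks from left to right, identifying the sinks of each block with the sources of the next, Lemma~\ref{lem-pn-product} then produces a planar network $(D,\mathrm{wt})$ for the product $M$ whose source and sink sequences again lie in matching order on two parallel lines, hence are fully compatible, and all of whose arc weights are non-negative; this is precisely the assertion of the theorem.

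The heart of the argument is the factorization, which I would obtain by Neville (consecutive-row) elimination. Suppose first that $m_{1,1}>0$. Working upward, for $i=n,n-1,\ldots,2$ subtract $(m_{i,1}/m_{i-1,1})$ times row $i-1$ from row $i$; since the entries of $M$ are non-negative so are the multipliers, and after this single pass the first column is supported on the diagonal. This exhibits $M$ as a product of elementary lower bidiagonal matrices with non-negative entries times the resulting matrix; a symmetric pass on the columns then peels off a product of non-negative elementary upper bidiagonal matrices on the right and leaves $\mathrm{diag}(m_{1,1})\oplus M'$ with $M'$ an $(n-1)\times(n-1)$ matrix. This $M'$ is again totally positive, because Neville elimination preserves total positivity (a theorem of Gasca--Pe\~na) and total positivity is inherited by submatrices, while $\mathrm{diag}(m_{1,1})\oplus I$ contributes one further non-negative diagonal factor. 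Applying the inductive hypothesis to $M'$ and iterating then produces the displayed factorization.

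The step I expect to be the main obstacle is the degenerate behaviour of the elimination: a pivot $m_{i-1,1}$ occurring in a multiplier may vanish, and $m_{1,1}$ itself may be zero. Both are controlled by total positivity of $2\times 2$ minors: if $m_{1,1}=0$, the minors through the top-left corner force the whole first row or the whole first column of $M$ to vanish, and a network for $M$ is obtained from one for the corresponding $(n-1)\times(n-1)$ submatrix by adjoining an isolated source or sink in the correct position; if a pivot vanishes partway through a pass, the same argument shows the sub-column in question is already zero, so no operation is needed there and the pass continues. Organising this bookkeeping uniformly over all the boundary cases is the only delicate point, and none of it is deep; alternatively one may simply quote the classical bidiagonal factorization of (possibly singular) totally positive matrices, due to Loewner--Whitney and to Cryer, and pass directly to the gluing step. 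A last routine observation, already implicit in Example~\ref{ex-bidiag} and Lemma~\ref{lem-pn-product}, is that sources and sinks lying in matching order on two parallel vertical lines of a planar network are automatically fully compatible, by a Jordan-curve argument.
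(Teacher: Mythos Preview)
The paper does not supply a proof of this theorem at all: it is stated as a result of Brenti \cite{Bre95} and used as a black box, so there is no ``paper's own proof'' to compare against. Your sketch is nonetheless the classical argument---bidiagonal (Loewner--Whitney/Cryer) factorization obtained by Neville elimination, followed by concatenation of the elementary planar networks---and is correct in outline, including your handling of the degenerate-pivot cases via $2\times 2$ minors.
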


\section{Main results}\label{sec-main}

In this section, we shall first give a combinatorial proof for Theorem \ref{thm-abst}, which involves a planar network construction for tri-diagonal matrices. Later we give a slight generalization of the first part of Theorem \ref{thm-PF}, where the existence of certain planar networks are proved by means of Example \ref{ex-bidiag}, Lemma \ref{lem-pn-product} and Theorem \ref{thm-tp-pn}.
\begin{thm}\label{thm-comb-abst}
  Let $R$ be a Riordan array with production matrix
  \[
    J(R) = 
    \begin{pmatrix}
    a & r & \quad  & \quad  & \quad\\
    b & s & r & \quad  & \quad\\
    \quad  & t & s & r & \\
    \quad  & \quad  & t & s & r \\
    \quad  & \quad  & \quad  & \ddots & \ddots & \ddots
    \end{pmatrix},
  \]
  where $a,b,r,s,t$ are five non-negative numbers.
\begin{itemize}
\item[(i)] If $as \ge br$ and $s^2 \ge rt$, then $J(R)$ has a natural network and is $\textup{TP}_2$. In particular, $(r_{n,0})_{n \ge 0}$ is log-convex.
  \item[(ii)] If $s^2 \ge 4rt$ and $a(s+\sqrt{s^2-4rt})/2 \ge br$, then $J(R)$ has a planar network interpretation with non-negative weights. In particular, the corresponding Riordan array $R(a,b;r,s,t)$ is $\textup{TP}$.
\end{itemize}
\end{thm}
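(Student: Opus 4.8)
The plan is to exhibit $J(R)$ --- which is the Jacobi matrix with sub-, main- and super-diagonals $(b,t,t,\dots)$, $(a,s,s,\dots)$, $(r,r,\dots)$ --- as a product of a lower and an upper bi-diagonal matrix with non-negative entries, build the associated networks by Example \ref{ex-bidiag}, glue them by Lemma \ref{lem-pn-product}, and then invoke Theorem \ref{thm-prod-mat-tp} to pass from the total positivity (resp. $\textup{TP}_2$-ness) of $J(R)$ to that of $R$ and to the log-convexity of $(r_{n,0})_{n\ge 0}$ (note $z_0=a$, $z_1=b$, $z_i=0$ are all non-negative).

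For part (ii), put $\alpha=(s+\sqrt{s^2-4rt})/2$ and $\beta=(s-\sqrt{s^2-4rt})/2$; the hypothesis $s^2\ge 4rt$ is exactly what makes these real, and one has $s\ge\alpha\ge\beta\ge 0$, $\alpha+\beta=s$, $\alpha\beta=rt$. I would seek a factorization $J(R)=B_2B_1$ in which $B_1$ is upper bi-diagonal with unit main diagonal and super-diagonal $(r/d_0,r/d_1,\dots)$, and $B_2$ is lower bi-diagonal with main diagonal $(d_0,d_1,d_2,\dots)$ and sub-diagonal $(b,t,t,\dots)$. Comparing entries forces $d_0=a$, $d_1=s-br/a$, and the Stieltjes-type recursion $d_i=s-rt/d_{i-1}$ for $i\ge 2$, and all the weights are then non-negative as soon as every $d_i>0$. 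This is where the second hypothesis enters: since $br\le a\alpha$ gives $d_1=s-br/a\ge s-\alpha=\beta$, and since $\phi(x):=s-rt/x$ is increasing on $(0,\infty)$ with $\phi(\beta)=\beta$ and $\phi(x)-\beta=\alpha(x-\beta)/x\ge 0$ for $x\ge\beta$, an induction yields $d_i\ge\beta$ for all $i$; moreover $\beta>0$ whenever $rt>0$, so $d_i>0$ there, while the remaining degenerate configurations ($a=0$, which forces $br=0$; $rt=0$; or $d_1=\beta=0$) either are trivial or reduce to a single bi-diagonal matrix and are handled directly. With all $d_i>0$, Example \ref{ex-bidiag} supplies planar networks with non-negative weights and fully compatible source/sink sequences for $B_1$ and for $B_2$, Lemma \ref{lem-pn-product} glues them (on each finite truncation, or directly) into such a network for $J(R)=B_2B_1$, so $J(R)$, and hence $R(a,b;r,s,t)$, is $\textup{TP}$.

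For part (i) one cannot expect a global non-negative bi-diagonal factorization (the iterates $d_i$ may become negative once $s^2<4rt$), so I would argue $\textup{TP}_2$ locally. As all entries of $J(R)$ are non-negative, $\textup{TP}_2$ amounts to the non-negativity of every $2\times 2$ minor, and the tridiagonal pattern --- an entry $(i,j)$ can be non-zero only for $|i-j|\le 1$ --- forces every $2\times 2$ submatrix to be triangular (up to a zero row or column), so that its minor is a product of entries (hence $\ge 0$), with the sole exception of the central blocks on rows and columns $\{i,i+1\}$: for $i=0$ this block is $\left(\begin{smallmatrix}a&r\\ b&s\end{smallmatrix}\right)$ with minor $as-br\ge 0$, and for $i\ge 1$ it is $\left(\begin{smallmatrix}s&r\\ t&s\end{smallmatrix}\right)$ with minor $s^2-rt\ge 0$, by hypothesis. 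Each such block is itself a $2\times 2$ totally positive matrix carrying a natural little planar network --- by Theorem \ref{thm-tp-pn}, or explicitly via the one-step factorization $\left(\begin{smallmatrix}1&0\\ t/s&1\end{smallmatrix}\right)\left(\begin{smallmatrix}s&r\\ 0&(s^2-rt)/s\end{smallmatrix}\right)$ when $s>0$ (and the trivial network when $s=0$) --- and assembling these is the ``natural network'' of the statement; it records faithfully the only minors at stake. Theorem \ref{thm-prod-mat-tp}(i) with $r=2$ then gives that $R$ is $\textup{TP}_2$, and Theorem \ref{thm-prod-mat-tp}(ii) gives the log-convexity of $(r_{n,0})_{n\ge 0}$.

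I expect the heart of the matter to be the positivity of the Stieltjes iterates $d_i$ in part (ii): one must identify $[\beta,\infty)$ as the correct invariant interval of the continued-fraction map $\phi$ and check that the boundary value $d_1=s-br/a$ lands in it precisely when $a\alpha\ge br$, and then dispose cleanly of the degenerate cases in which $a$, $r$, $t$ or some $d_i$ vanishes. The rest --- the entrywise identity $J(R)=B_2B_1$, the reduction of $\textup{TP}_2$ to central minors, and the invocations of Example \ref{ex-bidiag}, Lemma \ref{lem-pn-product} and Theorem \ref{thm-prod-mat-tp} --- is routine bookkeeping.
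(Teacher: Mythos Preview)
Your argument for part (ii) is essentially the paper's: the planar network the authors build (Figure~\ref{fig-tp}) is precisely the gluing of the two bi-diagonal networks coming from an $LU$-type factorization of $J(R)$, with their $k_i$ equal to your $d_i$ and the same Stieltjes recursion $k_i=s-rt/k_{i-1}$, the same induction against the fixed point $\beta=(s-\sqrt{s^2-4rt})/2$, and the same case split for the degenerate situations. The only cosmetic difference is that they put the diagonal weights on the upper factor (so $L$ has unit diagonal) whereas you put them on the lower one; this is the same $LDU$ grouped two ways.

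For part (i) your route genuinely differs. The paper does \emph{not} argue $\textup{TP}_2$ by a direct case analysis of $2\times 2$ submatrices; instead it builds a single (non-planar) network for $J(R)$ --- sources $U_i$, sinks $V_i$, arcs $U_i\to V_{i-1},V_i,V_{i+1}$ with the crossing pair $U_i\to V_{i+1}$ and $U_{i+1}\to V_i$ drawn without a common vertex --- and applies the signed Lindstr\"om--Gessel--Viennot identity (Lemma~\ref{lem-lgv}(i)); the only surviving negative term is the crossed pair, which contributes $br$ or $rt$ against $as$ or $s^2$. That network \emph{is} the ``natural network'' the statement refers to. Your direct minor computation is perfectly valid for proving $\textup{TP}_2$ (and indeed recovers exactly the same two inequalities), but your ``assembly of little $2\times 2$ networks'' is not a network for $J(R)$ with sources/sinks indexed by all rows/columns, so it does not quite discharge the network clause of the statement as the paper intends. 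What your approach buys is that it avoids the non-planar LGV lemma entirely; what the paper's approach buys is a uniform network picture for both parts, with (ii) obtained by ``planarizing'' the network of (i) via the intermediate layer $W_i$.
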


\begin{proof} 
In case (i), we let $D$ be the digraph with
\[
    V(D) = \{U_i \mid i \ge 0\} \cup \{V_i \mid i \ge 0\} 
\]
and
\begin{align*}
    A(D) =& \{U_i \to V_i \mid i \ge 0\} \cup \{U_i \to V_{i+1} \mid i \ge 0\} \cup \{U_{i+1} \to V_i \mid i \ge 0\},
\end{align*}
where the coordinates are given by $U_i = (0,-i)$, $V_i = (1,-i)$ and we require that $U_i \to V_{i+1}$ and $U_{i+1} \to V_i$ do not intersect. The weight function is defined as
\begin{align*}
&\mathrm{wt}(U_0 \to V_0) = a, \quad \mathrm{wt}(U_1 \to V_0) = b, \quad \mathrm{wt}(U_0 \to V_1) = r, \\
&\mathrm{wt}(U_i \to V_{i+1}) = r, \quad \mathrm{wt}(U_i \to V_i) = s, \quad \mathrm{wt}(U_{i+1} \to V_i) = t \text{ for } i \ge 1,
\end{align*}
see Figure \ref{fig-tp2} for an illustration.
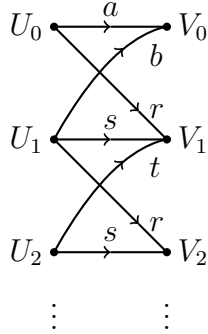
\begin{figure}[htbp]
  \centering
\begin{tikzpicture}[scale=1.5]
\fill (-4.5,1) circle (0.2ex);
\fill (-3.5,1) circle (0.2ex);
\fill (-4.5,0) circle (0.2ex);
\fill (-3.5,0) circle (0.2ex);
\fill (-4.5,-1) circle (0.2ex);
\fill (-3.5,-1) circle (0.2ex);

\draw [thick] [->] (-4.5,1) -- (-4,1);
\draw [thick](-4,1) -- (-3.5,1);
\draw [thick]plot[smooth, tension=1] coordinates {(-4.5,0) (-4,0.7) (-3.5,1)};
\draw [thick] [->] (-3.9,0.8)--(-3.89,0.81);
\draw [thick][->] (-4.5,0) -- (-4,0);
\draw [thick](-4,0) -- (-3.5,0);
\draw [thick][->] (-4.5,-1) -- (-4,-1);
\draw [thick]plot[smooth, tension=1] coordinates {(-4.5,-1) (-4,-0.3) (-3.5,0)};
\draw [thick] [->] (-3.9,0.8-1)--(-3.89,0.81-1);
\draw [thick](-4,-1) -- (-3.5,-1);
\draw [thick][->](-4.5,1) -- (-3.75,0.25);
\draw [thick](-3.75,0.25) -- (-3.5,0);
\draw [thick][->](-4.5,0) -- (-3.75,-0.75);
\draw [thick](-3.75,-0.75) -- (-3.5,-1);

\node[above] at (-4,1) { $a$};
\node[right] at (-3.75,0.75) { $b$};
\node[above] at (-4,0) { $s$};
\node[right] at (-3.75,-0.75) { $r$};
\node[above] at (-4,-1) { $s$};
\node at (-4.5,-1.5) {$\vdots$};
\node at (-3.5,-1.5) {$\vdots$};
\node[left] at (-4.5,1) {$U_0$};
\node[right] at (-3.5,1) {$V_0$};
\node[left] at (-4.5,0) {$U_1$};
\node[right] at (-3.5,0) {$V_1$};
\node[left] at (-4.5,-1) {$U_2$};
\node[right] at (-3.5,-1) {$V_2$};
\node[right] at (-3.75,0.25) {$r$};
\node[right] at (-3.75,-0.25) {$t$};

\end{tikzpicture}
  \caption{An illustration for the network in case (i)}\label{fig-tp2}
\end{figure}

Note that this network is not planar, but we can also apply Lemma \ref{lem-lgv} (i) to prove that $J(R)$ is $\textup{TP}_2$. If we take $i_1 < i_2$ and $j_1 < j_2$, then 
\begin{equation*}
\det J(R)_{(i_1,i_2),(j_1,j_2)} = \sum_{(P_1,P_2) \in \mathcal{N}((i_1,i_2),(j_1,j_2))} \mathrm{wt}(P_1)\mathrm{wt}(P_2) - 
\sum_{(P_1,P_2) \in \mathcal{N}((i_1,i_2),(j_2,j_1))} \mathrm{wt}(P_1)\mathrm{wt}(P_2).
\end{equation*}
By our construction, the first sum on the right hand side is non-negative since $a,b,r,s,t \ge 0$, and the second sum is zero unless $i_1=j_1$ and $i_2=j_2=i_1+1$, in which case
\[
\det J(R)_{(i_1,i_2),(j_1,j_2)} = as-br \textup{ or } s^2-rt \ge 0.
\]
Hence $J(R)$ is $\textup{TP}_2$ and the log-convexity of $(r_{n,0})_{n\ge0}$ follows from Theorem \ref{thm-prod-mat-tp} (ii).

For case (ii), we need to construct a more complicated planar network and apply Lemma \ref{lem-lgv} (ii). Let $D$ be the digraph with
\[
    V(D) = \{U_i \mid i \ge 0\} \cup \{W_i \mid i \ge 0\} \cup \{V_i \mid i \ge 0\}
\]
and
\begin{align*}
    A(D) =& \{U_i \to W_i \mid i \ge 0\} \cup \{W_i \to V_i \mid i \ge 0\} \cup \{U_{i+1} \to W_i \mid i \ge 0\} \cup \{W_i \to V_{i+1} \mid i \ge 0\},
\end{align*}
where the coordinates are given by $U_i = (0,-i)$, $W_i = (1,-i)$ and $V_i = (2,-i)$. Then we proceed to give the weight function.
\begin{itemize}
  \item[(1)] If $r=0$, then $R$ degenerates to a bi-diagonal matrix and by Example \ref{ex-bidiag} naturally admits a planar network interpretation with non-negative weights since $a,b,s,t \ge 0$. Similarly, if $r>0$ and $s=0$, then by $s^2 \ge 4rt$ and $a(s+\sqrt{s^2-4rt})/2 \ge br$ we have $b=t=0$ and in this case $R$ is also bi-diagonal.
  \item[(2)] If $r > 0$ and $s>0$, then we define $\mathrm{wt}(U_i \to W_i) = 1$ and $\mathrm{wt}(W_i \to V_{i+1}) = r > 0$ for all $i \ge 0$. Moreover, we let $\mathrm{wt}(W_i \to V_i) = k_i$ and $\mathrm{wt}(U_{i+1} \to W_i) = l_i$ for $i \ge 0$, where $k_i$ and $l_i$ are defined recursively as follows. See Figure \ref{fig-tp} for an illustration.
      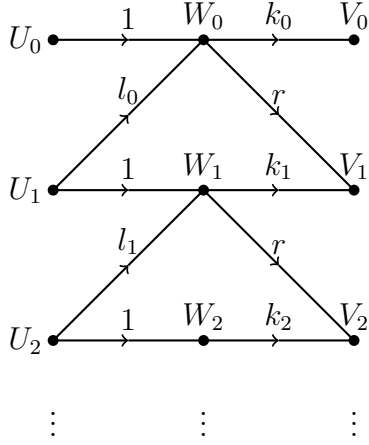
\begin{figure}[htbp]
        \centering
        \begin{tikzpicture}[scale=2]
\fill (-4.5,1) circle (0.2ex);
\fill (-3.5,1) circle (0.2ex);
\fill (-4.5,0) circle (0.2ex);
\fill (-3.5,0) circle (0.2ex);
\fill (-4.5,-1) circle (0.2ex);
\fill (-3.5,-1) circle (0.2ex);
\fill (-2.5,1) circle (0.2ex);
\fill (-2.5,0) circle (0.2ex);
\fill (-2.5,-1) circle (0.2ex);
\draw [thick] [->] (-4.5,1) -- (-4,1);
\draw [thick](-4,1) -- (-3.5,1);
\draw [thick] [->] (-3.5,1) -- (-3,1);
\draw [thick](-3,1) -- (-2.5,1);
\draw [thick][->] (-4.5,0) -- (-4,0);
\draw [thick](-4,0) -- (-3.5,0);
\draw [thick][->] (-3.5,0) -- (-3,0);
\draw [thick](-3,0) -- (-2.5,0);
\draw [thick][->] (-4.5,-1) -- (-4,-1);
\draw [thick](-4,-1) -- (-3.5,-1);
\draw [thick][->] (-3.5,-1) -- (-3,-1);
\draw [thick](-3,-1) -- (-2.5,-1);

\draw [thick][->](-4.5,0) -- (-4,0.5);
\draw [thick](-4,0.5) -- (-3.5,1);
\draw [thick][->](-4.5,-1) -- (-4,-0.5);
\draw [thick](-4,-0.5) -- (-3.5,0);

\draw [thick][->](-3.5,1) -- (-3,0.5);
\draw [thick](-3,0.5) -- (-2.5,0);
\draw [thick][->](-3.5,0) -- (-3,-0.5);
\draw [thick](-3,-0.5) -- (-2.5,-1);

\node[above] at (-4,1) {$1$};
\node[above] at (-4,0.5) { $l_0$};
\node[above] at (-4,0) { $1$};
\node[above] at (-4,-0.5) { $l_1$};
\node[above] at (-4,-1) { $1$};
\node at (-4.5,-1.5) {$\vdots$};
\node at (-3.5,-1.5) {$\vdots$};
\node[left] at (-4.5,1) {$U_0$};
\node[above] at (-3.5,1) {$W_0$};
\node[left] at (-4.5,0) {$U_1$};
\node[above] at (-3.5,0) {$W_1$};
\node[left] at (-4.5,-1) {$U_2$};
\node[above] at (-3.5,-1) {$W_2$};
\node[above] at (-2.5,1) {$V_0$};
\node[above] at (-2.5,0) {$V_1$};
\node[above] at (-2.5,-1) {$V_2$};
\node[above] at (-3,0.5) {$r$};
\node[above] at (-3,-0.5) {$r$};
\node[above] at (-3,1) {$k_0$};
\node[above] at (-3,0) {$k_1$};
\node[above] at (-3,-1) {$k_2$};
\node at (-2.5,-1.5) {$\vdots$};
\end{tikzpicture}
        \caption{An illustration for the planar network in case (ii) (2)}\label{fig-tp}
      \end{figure}
      \begin{itemize}
      \item[(2.1)] If $a > 0$, then we take
        \[
        k_i = \begin{cases}
                a, & \mbox{if } i=0 \\
                s-\frac{br}{a}, & \mbox{if } i=1 \\
                s-\frac{rt}{k_{i-1}}, & \mbox{if } i \ge 2
              \end{cases}
        \quad \textup{and} \quad
        l_i = \begin{cases}
                \frac{b}{a}, & \mbox{if } i=0 \\
                \frac{t}{k_i}, & \mbox{if } i \ge 1
              \end{cases}.
        \]
        One can easily verify that $(D,w)$ is a planar network for $R$ and $(U_0,U_1,\ldots),(V_0,V_1,\ldots)$ are fully compatible. Then we only need to show that $k_i,l_i \ge 0$ for all $i \ge 1$.
        
        If $t=0$, then $k_i=s > 0$ for $i \ge 2$ and $l_i = 0$ for $i \ge 1$, and $k_1=(as-br)/a\ge 0$ because $as \ge a(s+\sqrt{s^2-4rt})/2 \ge br$. If $t > 0$, then
        \[
        k_1 = s-\frac{br}{a} \ge s-\frac{s+\sqrt{s^2-4rt}}{2} = \frac{s-\sqrt{s^2-4rt}}{2} > 0
        \]
        since $r,s > 0$. Assume that $k_i \ge (s-\sqrt{s^2-4rt})/{2} > 0$ for some $i \ge 1$. Then
        {\small \[
        k_{i+1} = s-\frac{rt}{k_{i}} \ge s-\frac{2rt}{s-\sqrt{s^2-4rt}} = s-\frac{2rt(s+\sqrt{s^2-4rt})}{4rt} = \frac{s-\sqrt{s^2-4rt}}{2} > 0,
        \]}
        
        \noindent and hence by induction $k_i > 0$ for all $i \ge 1$. It follows that $l_i > 0$ for all $i \ge 1$.
        \item[(2.2)] If $a=0$, then we have $b=0$ since $as-br \ge 0$. Let
        \[
        k_i = \begin{cases}
                0, & \mbox{if } i=0 \\
                s, & \mbox{if } i=1 \\
                s-\frac{rt}{k_{i-1}}, & \mbox{if } i \ge 2
              \end{cases}
        \quad \textup{and} \quad
        l_i = \begin{cases}
                0, & \mbox{if } i=0 \\
                \frac{t}{k_i}, & \mbox{if } i \ge 1
              \end{cases}.
        \]
        Similar to case (2.1), $(D,w)$ is a planar network for $R$ with fully compatible source and sink sequences $(U_0,U_1,\ldots),(V_0,V_1,\ldots)$. Now we only need to show that $k_i \ge 0$ for $i \ge 2$ and $l_i \ge 0$ for $i \ge 1$.
        
        If $t=0$, then $k_i=s > 0$ for $i \ge 2$ and $l_i = 0$ for $i \ge 1$. If $t > 0$, then $k_1 > k_2 > s/2 > 0$ since $r,s>0$ and $s^2 \ge 4rt > 2rt$. Assume that $k_i > s/2 > 0$ for some $i \ge 2$. Then
        \[
        k_{i+1} = s-\frac{rt}{k_{i}} > s- \frac{rt}{s/2} = \frac{s^2-2rt}{s} \ge \frac{s}{2} >0
        \]
        and hence by induction $k_i > 0$ for all $i \ge 1$. In addition, $l_i > 0$ for all $i \ge 1$.
      \end{itemize}
\end{itemize}
Now we have proved the total positivity of $J(R)$, and then the total positivity of $R$ follows from Theorem \ref{thm-prod-mat-tp} (i).
\end{proof}
\begin{rem}
In fact, the proof of total positivity in Theorem \ref{thm-prod-mat-tp} is based on matrix decompositions, which may also be stated in terms of planar networks in view of Lemma \ref{lem-pn-product} and Theorem \ref{thm-tp-pn}.
\end{rem}
Then we proceed to give the combinatorial proof of the result of  total positivity in Theorem \ref{thm-PF}, and actually we shall prove a slightly stronger result.
\begin{thm}\label{thm-comb-PF}
Let $R=(r_{n,k})_{n,k\ge0}$ be a Riordan array with $A=(a_{n})_{n\ge0}$ and $Z=(ta_{0},ta_1,ta_2,\ldots)$ or $Z=(ta_{0}+a_1,a_2,a_3,\ldots)$ for some $t \ge 0$. If $A$ is $\textup{PF}$, then $R$ is $\textup{TP}$. 
In particular, this result holds for all consistent and quasi-consistent Riordan arrays.
\end{thm}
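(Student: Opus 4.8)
The plan is to reduce the problem to the production matrix. By Theorem~\ref{thm-prod-mat-tp}(i) it suffices to show that $J(R)$ is $\textup{TP}$, and to do this I will factor $J(R)$ as a product of the Toeplitz matrix $T_A=(a_{i-j})_{i,j\ge0}$ of $A$ with one explicit bi-diagonal matrix, and then feed this factorization into the planar-network tools of Section~\ref{sec-pn}.

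Let $M$ be the upper bi-diagonal matrix with $M_{0,0}=t$, $M_{i,i+1}=1$ for all $i\ge0$ and all other entries equal to $0$:
\[
M=\begin{pmatrix} t & 1 & & \\ & 0 & 1 & \\ & & 0 & 1 \\ & & & \ddots \end{pmatrix}.
\]
I claim that $J(R)=T_AM$ in the first case $Z=(ta_0,ta_1,ta_2,\ldots)$, and $J(R)=MT_A$ in the second case $Z=(ta_0+a_1,a_2,a_3,\ldots)$. The first identity is immediate on comparing columns: the $0$th column of $J(R)$ equals $t$ times the $0$th column of $T_A$, and the columns of $J(R)$ of index $\ge1$ are the columns of $T_A$ shifted right by one; right multiplication by $M$ does exactly this. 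The second identity is the row-wise mirror: for $i\ge1$ the $i$th row of $J(R)$ is the $(i+1)$st row of $T_A$, while the $0$th row of $J(R)$ is the $1$st row of $T_A$ plus $t$ times its $0$th row, which is precisely the effect of left multiplication by $M$. Taking $t=1$ in the first case gives a consistent Riordan array and $t=0$ in the second case a quasi-consistent one, so the last assertion of the theorem will follow automatically from the general statement.

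It remains to assemble the network. Since $A$ is $\textup{PF}$, the matrix $T_A$ is $\textup{TP}$ by definition, so Theorem~\ref{thm-tp-pn} supplies, for each $n$, a planar network with non-negative weights and fully compatible source and sink sequences for the leading $n\times n$ submatrix of $T_A$ (when $A$ is a polynomial one may instead write $T_A$ explicitly as a product of bi-diagonal factors via the classical factorization of $\textup{PF}$ polynomials and bypass Theorem~\ref{thm-tp-pn}). The matrix $M$ has only the non-negative entries $t$ and $1$, so by Example~\ref{ex-bidiag} it admits such a network as well. Because $M$ is banded, the leading $n\times n$ submatrix of $T_AM$ is exactly the product of the leading $n\times n$ submatrices of $T_A$ and $M$; for $MT_A$ the analogous statement holds up to one extra row and column, which one accommodates by padding a factor with a zero row or a zero column, i.e.\ by adjoining an isolated source or sink — a step that preserves both total positivity and the network structure. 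Then Lemma~\ref{lem-pn-product} glues the two networks into a planar network with non-negative weights for every leading submatrix of $J(R)$, Lemma~\ref{lem-lgv}(ii) makes each of these submatrices $\textup{TP}$, and hence $J(R)$ is $\textup{TP}$ because every minor sits inside some leading submatrix. Finally $R$ is $\textup{TP}$ by Theorem~\ref{thm-prod-mat-tp}(i).

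The step I expect to be the real obstacle is locating the factorization itself: realizing that the single anomalous $Z$-column of $J(R)$ can be folded, by one and the same bi-diagonal matrix $M$, into a column operation on $T_A$ in the consistent-type case and into a row operation on $T_A$ in the quasi-consistent-type case, so that in both regimes $J(R)$ becomes a product of matrices each already known to be $\textup{TP}$ with an explicit planar network. Everything after that is a direct appeal to Section~\ref{sec-pn}, apart from the routine bookkeeping of passing from Brenti's finite-dimensional networks to the infinite matrices $T_A$ and $J(R)$, which is the only place where one must take a little care but which hides no real difficulty.
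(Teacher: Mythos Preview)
Your proposal is correct and follows essentially the same route as the paper: the same factorizations $J(R)=T_AM$ and $J(R)=MT_A$ with the very same bi-diagonal matrix $M$, followed by the same chain of appeals to Example~\ref{ex-bidiag}, Theorem~\ref{thm-tp-pn}, Lemma~\ref{lem-pn-product}, Lemma~\ref{lem-lgv}(ii) and Theorem~\ref{thm-prod-mat-tp}(i), with the special cases $t=1$ and $t=0$ recovering the consistent and quasi-consistent situations. The only difference is that you spell out the passage from Brenti's finite networks to the infinite matrices, a point the paper leaves implicit.
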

\begin{proof}
At first, we characterize the production matrices of such Riordan arrays. If the $Z$-sequence is $(ta_{0},ta_1,ta_2,\ldots)$, then the corresponding production matrix $J(R)$ satisfies
\begin{equation}\label{eq-PF-right}
    J(R) =
    \begin{pmatrix}
    ta_{0} & a_0 & \quad & \quad & \quad\\
    ta_{1} & a_1 & a_0 & \quad & \quad\\
    ta_{2} & a_2 & a_1 & a_0 & \quad\\
    \vdots & \vdots & \quad & \quad & \ddots
    \end{pmatrix}
    =
    \begin{pmatrix}
    a_0 & \quad & \quad & \quad\\
    a_1 & a_0 & \quad & \quad\\
    a_2 & a_1 & a_0 & \quad\\
    \vdots & \quad & \quad & \ddots
    \end{pmatrix}
    \begin{pmatrix}
    t & 1 & 0 & 0 & \\
    0 & 0 & 1 & 0 & \\
    0 & 0 & 0 & 1 & \\
    \vdots &  &  &  & \ddots
    \end{pmatrix}.
\end{equation}
Similarly, if $Z=(ta_{0}+a_1,a_2,a_3,\ldots)$, then
\begin{equation}\label{eq-PF-left}
    J(R) =
    \begin{pmatrix}
    ta_{0}+a_1 & a_0 & \quad & \quad & \quad\\
    a_{2} & a_1 & a_0 & \quad & \quad\\
    a_{3} & a_2 & a_1 & a_0 & \quad\\
    \vdots & \vdots & \quad & \quad & \ddots
    \end{pmatrix}
    =
    \begin{pmatrix}
    t & 1 & 0 & 0 & \\
    0 & 0 & 1 & 0 & \\
    0 & 0 & 0 & 1 & \\
    \vdots &  &  &  & \ddots
    \end{pmatrix}
    \begin{pmatrix}
    a_0 & \quad & \quad & \quad\\
    a_1 & a_0 & \quad & \quad\\
    a_2 & a_1 & a_0 & \quad\\
    \vdots & \quad & \quad & \ddots
    \end{pmatrix}.
\end{equation}
By Example \ref{ex-bidiag} and Theorem \ref{thm-tp-pn} the two matrices on the right hand side of both identities admit planar networks with non-negative weights if $t \ge 0$. Moreover, by Lemma \ref{lem-pn-product} these two planar networks may be combined to form a planar network for $J(R)$ in either case. It follows from Lemma \ref{lem-lgv} (ii) that $J(R)$ is $\textup{TP}$ if $A$ is $\textup{PF}$, in which case $R$ is also $\textup{TP}$ by Theorem \ref{thm-prod-mat-tp}.

In particular, if $R$ is a consistent array (resp. quasi-consistent array), we just need to take $t=1$ in \eqref{eq-PF-right} (resp. take $t=0$ in \eqref{eq-PF-left}).
\end{proof}

\begin{exam}
  As a simple application, we provide more totally positive Riordan arrays with the help of OEIS \cite{OEIS}. By the criterion given by Aissen, Schoenberg and Whitney \cite{ASW52}, the sequence $A=(1,1,1,\ldots)$ is \textup{PF} since its generating function is $1/(1-x)$.
  \begin{itemize}
    \item Let $Z=(2,2,2,\ldots)$. Then the corresponding Riordan array $R=(\binom{2n-k}{n})_{n,k \ge 0}$ is totally positive, see the OEIS sequence A092392 for more information.
    \item Let $Z=(2,1,1,\ldots)$. Then the corresponding Riordan array $R$ is totally positive, which gives partial row sums of the reversed Catalan triangle, see the OEIS sequence A054445 for more information.
  \end{itemize}
\end{exam}

\noindent{\bf Acknowledgments.}
We would like to thank Professor Xi Chen for helpful discussions. Ethan Li is supported by the National Natural Science Foundation of China (No. 12501462).

\end{document}